\newtheorem{theorem}{Theorem}[section]
\newtheorem{lemma}{Lemma}[section]
\newtheorem{conj}{Conjecture}[section]
\title[Arithemetic properties of blocks of consecutive integers]
{Arithmetic properties of blocks of consecutive integers}
\author{Tarlok N. Shorey}
\address{Department of Mathematics\\
Indian Institute of Technology\\
Powai, Mumbai-400 076, India }
\email{shorey@math.iitb.ac.in}
\author{Rob Tijdeman}
\address{Mathematical Institute\\
Leiden University\\
2300 RA Leiden, P.O. Box 9512\\
The Netherlands}
\email{tijdeman@math.leidenuniv.nl}
\subjclass{}
\keywords{block of integers, greatest prime factor, number of prime factors, greatest squarefree divisor, powerfree part}
\thanks{The authors are grateful for the hospitality of the Max-Planck-Institute for Marthematics in Bonn, Germany. A basis of this paper was laid during a stay of the first author from May until July, 2014, and of the second author in July.}
\begin{document}
\begin{abstract}
This paper provides a survey of results on the greatest prime factor, the number of distinct prime factors, the greatest squarefree factor and the greatest $m$-th powerfree part of a block of consecutive integers, both without any assumption and under assumption of the $abc$-conjecture. Finally we prove that the explicit $abc$-conjecture implies the Erd\H{o}s-Woods conjecture for each $k \geq 3$.
\end{abstract}

\maketitle

\section{Introduction} 
Arithmetic properties of blocks of integers play an important role in various number theoretic questions. Let $n$ and $k$ be integers with $n >k \geq 3$ and consider $N = n(n+1) \cdots (n+k-1)$. Lower bounds on the greatest prime factor and the number of distinct prime divisors of $N$ are used in results on Diophantine equations of the form $N=y^q$ where $y>1, q>1$ are integers (see e.g.\cite{ns},\cite{gss},\cite{lai}) and on  irreducibility of polynomials (see e.g. \cite{ft},\cite{fs},\cite{st10}).  It is a question of Erd\H{o}s and Graham \cite{eg}, p. 67, which disjoint blocks of positive integers each of length $\geq 4$ have a perfect power as the product of their terms. The question was answered in the negative for more than two blocks by Ulas \cite{ula} and Bauer and Bennett \cite{bb}.  See also \cite{lw},\cite{bl}. It happens for two blocks if and only if the squarefree parts of the products of the terms of both blocks are equal. Therefore estimates on the squarefree part of $N$ restrict the range of possible solutions. An open conjecture of Erd\H{o}s and Woods  \cite{erd80} states that there is a $k$ such that there are no positive integers $n_1<n_2$ such that the integers $n_1+i$ and $n_2+i$ have exactly the same prime divisors for $i=0,1, \dots,k-1$. In results on this question the greatest squarefree divisor plays a crucial role.

In this paper we give a survey of results on these functions in the literature and add some new results. Notation is introduced in Section 2. In Section 3 we summarize results on  the greatest prime factor of $N$, in Section 4 those on the number of distinct prime factors on $N$. Section 5 deals with the greatest squarefree divisor of $N$ and Section 6 with the greatest $m$-th powerfree part of $N$. Section 7 contains results on the problem of Erd\H{o}s and Woods. 

The $abc$-conjecture provides good insight in the true behaviour of the considered functions. In Section 8 we mention the results which have been obtained under asssumption of the $abc$-conjecture. In Section 9 we apply the explicit $abc$-conjecture to the Erd\H{o}s-Woods conjecture. Finally we refer in Section 10 to generalizations in the literature of the results mentioned in this survey.

New in this paper are a lower bound for the greatest prime factor of $N$ when $n$ is very large compared to $k$, see Theorem 3.1, an improved lower bound for the greatest $m$-th powerfree part of $N$ for given $k,m$, Theorem 6.1, another approach to prove results on the greatest $m$-th powerfree part of $N$ for given $k,m$ under the $abc$-conjecture, Theorem 8.1, a new estimate for the greatest prime factor of $N$ for general $n$ and $k$ under the $abc$-conjecture, Theorem 8.2, and the proof that the explicit $abc$-conjecture implies the Erd\H{o}s-Woods conjecture for each $k \geq 3$, Theorem 9.1.

\section{Notation}

We write $P(x)$ for the greatest prime factor of the integer $x$, 
$\omega(x)$ for the number of distinct prime factors of $x$,
$R(x)$ for the greatest squarefree factor (the `radical') of $x$ and $Q_m(x)$ for the greatest $m$-th powerfree part of $x$.
We put $P(1)=1, \omega(1)=0, R(1)=Q_m(1)=1.$ Let
\begin{equation} \label{delta}
N=n(n+1)\cdots(n+k-1) 
\end{equation}
and 
\begin{equation}
P(n,k) = P(N), \omega(n,k) = \omega(N), R(n,k) = R(N), Q_m(n,k) = Q_m(N).
\end{equation}
We write $\exp_2 x$ for $\exp \exp x$, $\log_2 x$ for $\log \log x$, and so on.
We use the Vinogradov symbols $\ll$ and $\gg$ where the attached parameters indicate on which the constants involved depend. In a similar way we use the Landau symbols $O(\cdot)$ and $o(\cdot)$.

\section{The greatest prime factor}
Here we only formulate the best available results. For the history and more details we refer to \cite{ls05} and Section 1.1 of \cite{st07}.

\subsection{Complete results}
Sylvester proved in 1892 that a product of $k$ consecutive positive integers greater than $k$ is divisible by a prime exceeding $k$. This has been refined by Laishram and Shorey \cite{ls05} to 
$$P(n,k)>1.8k  ~{\rm for}~ n>k$$
unless $(n,k)\in \{(8,3),(6,4),(7,4), (15,13), (16,13),(4,3),(5,4),(6,5),\\(9,8),
(12,11),(14,13),(15,14),(19,18),(64,63)\}$, to
$$P(n,k) > 1.97k~ {\rm for} ~n>k+13$$ and to $$P(n,k) > 2k ~{\rm for}~ n> \max(k+13, 279k/262).$$ Further they calculated all the exceptions to the inequality $P(n,k) > 1.95k$ with $n>k$, but this list is too long to be reproduced here. They noted that the restrictions on $n,k$ are necessary. For example, $P(279,262) \leq 2 \times 262.$

%Shorey and  Tijdeman \cite{st10} proved that
%$$P(n,k) > 2.5 k  ~{\rm for}~ n>12k ~{\rm and}~ k>20.$$
Nair and Shorey \cite{ns} proved that 
$$P(n,k)> 4.42 k  ~{ \rm for}~ n>4k$$
and that
$$P(n,k)> 4.5k  ~{\rm for}~ n>4k ~{\rm and}~ k \not= 82,83.$$
This implies that for $n>100$
$$P(n,k)>4.42k$$
whenever $n,n+1,\dots,n+k-1$ are all composite.

\subsection{Asymptotic results}
If $n \leq k^{1.9}$ and $n$ is sufficiently large, it follows from the results on the difference between consecutive primes (see e.g. \cite{bhp}) that there is a prime in $[n,n+k-1].$ Hence $P(n,k)\geq n$
whenever $n$ is sufficiently large and $n<k^{1.9}$

For $n<k^{2-\varepsilon}$ with $0 <\varepsilon \leq 0.1$ the best result up to now is due to C. Jia and Liu \cite{jl}, viz. 
$$P(n,k) \gg_{\varepsilon} k^{\frac{25}{13} - \varepsilon}.$$

Harman \cite{har} Ch. 6 proved  $$P(k^2,k) \gg k^{1.48}.$$ 

For $k^2 < n \leq \exp(c_1(\log k)^{3/2}/(\log_2 k)^{1/2})$ for suitable $c_1$ the best available result is due to Sander \cite{san}, viz.
$$P(n,k) \gg k^{1 + c_2(\log k / \log n)^2} $$
for some positive constant $c_2$. 

For $ \exp(c_1(\log k)^{3/2}/(\log_2 k)^{1/2}) < n \leq \exp_2 \left((\log_2 k)^3/( \log_3k)^2\right)$ the best available bound is due to Shorey \cite{sh74}, viz.
\begin{equation} \label{sh74}
 P(n,k) \gg k \log k \frac{\log_2k}{\log_3 k}.
 \end{equation}

For $\exp_2 \left((\log_2 k)^3/ \log_3k)^2\right) < n \leq \exp_2 (\log^2 k / \log_2 k)$ the best available bound is due to  Ramachandra \cite{ram}, viz.
\begin{equation} \label{ram}
P(n,k) \gg k \log k \left( \frac {\log_2 n}{\log_2 k}\right)^{1/2} .
\end{equation}

For $n > \exp_2 (\log^2 k / \log_2 k)$ this is superseded by a result of the authors \cite{st90}:
$$ P(n,k) \gg k \log_2 n.$$ 
Combining the above results we see that (\ref{sh74}) is valid for $n>k^{3/2}$.

According to Langevin \cite{lan} the constant involved in (\ref{ram}) can be arbitrarily close to 1 by letting $n$ tend to $\infty$.
Here we shall derive the following sharpening by using an improved linear form estimate due to Matveev \cite{mat}.
The proof is an extension of \cite{tij}, Section 10 where the case $k=2$ was treated.

\begin{theorem}
For $k \geq 2$, $n > \exp_2 k$ and $n$ sufficiently large we have
$$P(n,k) \gg k\log_2 n \frac {\log_3 n}{ \log_4 n}.$$
\end{theorem}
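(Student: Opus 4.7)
The plan is to extend the argument of \cite{tij}, Section~10 (which handled the case $k=2$), to general $k\ge 2$, using Matveev's estimate \cite{mat} in place of the Baker-type bound used there. The sharper constant and the product $\prod_{p\le P}\log p$ appearing in Matveev's theorem (versus $(\log P)^{\pi(P)}$ in Baker's) are what should furnish the extra $\log_3 n/\log_4 n$ factor over the Shorey--Tijdeman 1990 bound $P(n,k)\gg k\log_2 n$.

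Suppose, for contradiction, that $P:=P(n,k)<c\,k\log_2 n\log_3 n/\log_4 n$ for a small absolute $c>0$. Then every $n+i$ with $0\le i\le k-1$ is $P$-smooth; write $n+i=\prod_{p\le P}p^{a_{i,p}}$. From these factorizations I would build a small nonzero linear form $\Lambda=\sum_{p\le P}b_p\log p$. For $k=2$ the Tijdeman choice is $\Lambda=\log((n+1)/n)$. For general $k$ a convenient choice is the $(k-1)$-th finite difference
\[
\Lambda \;=\; \sum_{i=0}^{k-1}(-1)^{k-1-i}\binom{k-1}{i}\log(n+i),
\]
so that the mean-value form of $\Delta^{k-1}$ yields $|\Lambda|\le(k-2)!\,n^{-(k-1)}$ (in particular $\Lambda\neq 0$ for $n$ large), while the bound $|b_p|\le 2^{k-1}\log(n+k)/\log p$ gives $\log B:=\log\max_p|b_p|=O(\log_2 n)$, using $k\le\log_2 n$ from the hypothesis $n>\exp_2 k$.

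Matveev's theorem applied to $\Lambda$, with $r=\pi(P)$, yields
\[
-\log|\Lambda|\;\le\; C_0(r)\,(1+\log B)\prod_{p\le P}\log p, \qquad \log C_0(r)=O(r).
\]
By the Prime Number Theorem, $\sum_{p\le P}\log\log p=(1+o(1))P\log\log P/\log P$, which dominates the $O(r)=O(P/\log P)$ contribution from $\log C_0(r)$. Comparing with $-\log|\Lambda|\ge(k-1)\log n-O(k\log k)$ and taking logarithms (absorbing $\log k$ and $\log\log B$ into $O(\log_3 n)$), one obtains $\log_2 n\lesssim P\log\log P/\log P$. Plugging in $P\asymp k\log_2 n\log_3 n/\log_4 n$ gives $\log P\sim\log_3 n$, $\log\log P\sim\log_4 n$, and $P\log\log P/\log P\sim k\log_2 n$, contradicting the above for $c$ sufficiently small.

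The main obstacle is preserving the factor $k$ through the chain: the $(k-1)\log n$ on the LHS compresses to $\log k$ after taking logarithms, which is itself absorbed by $\log_3 n$ in our range. The missing $k$ is recovered by feeding the Shorey--Tijdeman 1990 bound $\pi(P)\gg k\log_2 n/\log_3 n$ back in as an a-priori input, which tightens Matveev's estimate through its $r$-dependence in a way that restores the factor $k$ in the final inequality, in the spirit of \cite{st90}. Carrying out this bootstrap is the technical core of extending the $k=2$ argument of Section~10 of \cite{tij} to arbitrary $k$; the replacement of Baker by Matveev then contributes the additional $\log_3 n/\log_4 n$ factor over $P\gg k\log_2 n$.
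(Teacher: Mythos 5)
There is a genuine gap, and you have in fact located it yourself: your single linear form $\Lambda=\sum_{i}(-1)^{k-1-i}\binom{k-1}{i}\log(n+i)$ cannot produce the factor $k$. Comparing $-\log|\Lambda|\ge (k-1)\log n-O(k\log k)$ with Matveev's upper bound and taking logarithms leaves only $\log_2 n\lesssim \pi(P)\log\log P+O(\log k)$, which under $P\asymp k\log_2 n\log_3 n/\log_4 n$ reads $\log_2 n\lesssim ck\log_2 n$ --- no contradiction for any $c$ once $k\ge 2$. The proposed repair does not work either: Matveev's bound \emph{deteriorates} as the number $r$ of primes grows (both through $e^{O(r)}$ and through $\prod_{p}\log p$), so feeding in an a priori lower bound for $\pi(P)$ makes the inequality weaker, not tighter; there is no mechanism by which this ``bootstrap'' restores the factor $k$. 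The missing idea is to keep the per-term information rather than collapsing everything into one form.

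The paper's proof does exactly that. Choose $h$ minimizing $\omega(n+h)$ and, for each $j\ne h$, apply Matveev to the single ratio $(n+j)/(n+h)-1$, which lies in $(0,k/n)$ and involves only the at most $2s_j$ primes dividing $n+j$ or $n+h$, where $s_j=\omega(n+j)$. This yields $\log_2 n\ll s_j\log_2 P$ for every $j$; after reducing without loss of generality to $P\le(\log_2 n)^3$, one gets $s_j\gg\log_2 n/\log_4 n$, and the hypothesis $n>\exp_2 k$ makes this exceed $2\pi(k)$, so each $n+j$ has $\gg\log_2 n/\log_4 n$ prime factors greater than $k$. Since no prime exceeding $k$ divides two of the $n+j$, summing over $j$ gives $\omega(n,k)\gg k\log_2 n/\log_4 n$, and the Prime Number Theorem (the $T$-th prime is about $T\log T$) converts this into $P(n,k)\gg k\log_2 n\,\log_3 n/\log_4 n$. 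Note that the gain of $\log_3 n/\log_4 n$ over the $\gg k\log_2 n$ bound comes from this $\omega$-to-$P$ conversion, not, as you suggest, from the $\prod_p\log p$ term in Matveev's estimate. Your finite-difference identity is a genuinely useful device elsewhere in the paper (it underlies Lemma 8.1 and Theorem 8.1), but for this theorem the pairwise application is what carries the factor $k$.
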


\begin{proof}
Suppose $N = p_1^{k_1} \cdots p_s^{k_s}$ with distinct primes $p_1, \dots p_s$ and positive integers $k_1, \dots, k_s$.
Let $P = P(n,k) = \max_{i=1, \dots s}p_i$. Suppose $\omega (n+h) = \min_{i= 0, \dots, k-1} \omega (n+i)$.
Let $n+h = p_1^{h_1} \cdots p_s^{h_s}$ and $n+j = p_1^{j_1} \cdots p_s^{j_s}$ for any $j \not= h, j \in \{0, \dots, k-1\}$. Then
\begin{equation} \label{est}
 0 < \left | \frac {p_1^{j_1} \cdots p_s^{j_s}} {p_1^{h_1} \cdots p_s^{h_s}} -1 \right | < \frac kn.
 \end{equation}
Set $s_i = \omega (n+i)$ for all $i$. Note that for each $j$ with $0 \leq j <k$ at most $\omega(n+j) + \omega(n+h) \leq 2s_j$ primes have nonzero exponents.
We apply the following special case of a linear form estimate.
\begin{lemma} [Matveev \cite{mat}]
Let $p_1, \dots, p_s$ be primes. Put $\Omega = \prod_{j=1}^s \log p_j.$
Then there exists an absolute constant $c_{3}$ such that the inequalities
$$0 < | p_1^{b_1} \cdots p_s^{b_s} -1 | < \exp( -e^{c_{3}s} \Omega \log B)$$
have no solution in rational integers $b_1,...,b_s$ of absolute values not exceeding $B \geq 2$.
\end{lemma}
\noindent This gives
\begin{equation} \label{mat}
 \left | \frac {p_1^{j_1} \cdots p_s^{j_s}} {p_1^{h_1} \cdots p_s^{h_s}} -1 \right | > \exp(-e^{2c_{3}s_j} \Omega \log B)
 \end{equation}
where $\Omega \leq ( \log P)^{2s_j}, B \leq \frac {\log(n+k)}{\log 2}$. 
It follows from (\ref{est}) and (\ref{mat}) that
$$ \frac {\log n}{\log_2 n} \ll \frac {\log(n/k)} {\log_2 n} \ll e^{2c_3s_j} (\log P)^{2s_j}.$$
This implies 
$$ \log_2n \ll s_j \log_2 P.$$
Without loss of generality we may assume $P \leq (\log_2 n)^3$. Hence, for $n$ sufficiently large, by $n > \exp_2 k$,
$$ s_j \gg \frac {\log_2 n } {\log_4 n} > 2 \pi(k).$$
It follows that $n+j$ for $j=0, \dots, k-1, j \not= h$ contains $ \gg \frac {\log_2 n } {\log_4 n}$ distinct prime factors $> k$.
Since two such numbers $n+j$ cannot both be divisible by the same prime $>k$, we find
$$ \omega(n,k) \gg k \frac {\log_2 n} { \log_4 n}.$$
Thus, by the Prime Number Theorem,
$$ P(n,k) \gg k \log_2n \frac {\log_3 n} { \log_4 n}.$$
\end{proof}

\section{The number of distinct prime factors}
\subsection{Complete results}
For more details we refer to \cite{lai}.
Sylvester's earlier mentioned result implies that $$\omega(n,k) > \pi(k)$$ if $n>k$.
Let $\delta (k) =2$ for $3 \leq k \leq 6, =1$ for $7 \leq k \leq 16$, and $=0$ otherwise.
Laishram and Shorey \cite{ls05} improved upon earlier results by specifying all the finitely many exceptions to the inequality $$\omega (n,k) \geq \pi(k) +\left[ \frac 34 \pi(k) \right] -1 + \delta(k).$$
As corollaries they obtained
$$ {\it If}~ (n,k) \notin \{(114,109), (114,113) \}, ~{\it then}~ \omega(n,k) \geq \pi(k) + \left[ \frac 23 \pi(k) \right] -1$$
and
$$\omega(n,k) \geq \min \left( \pi(k) +\left[ \frac 34 \pi(k) \right] -1 + \delta(k), \pi(2k)-1\right).$$
They further proved
$$ {\it If}~ (n,k) \not= (6,4)~ {\it and}~12n > 17k, ~{\it then}~ \omega(n,k) \geq \pi(2k).$$
Again this is best possible, in view of $\omega(6,4)= \pi(8)-1$ and $\omega(34,24)= \pi(48)-1$.

%Shorey and Tijdeman (\cite{st92}, Theorem 1) proved that, for all $n$ and $k$, 
%$$ \omega(n,k) \geq k - \frac{\log k}{\log(n+k)}.$$

\subsection{Asymptotic results}
Ramachandra et al. \cite{rst} proved that there exists a constant $c_4>0$ such that for $n > \exp(c_4(\log k)^2)$,
$$\omega(n,k) > k.$$
Shorey and Tijdeman (\cite{st92}, Theorem 5) showed that there exists a constant $c_5>0$ such that for $k>c_5, n> \exp_2(c_5k)$,
$$ \omega(n,k) > k + \pi(k)-2.$$

\section{The greatest squarefree divisor}
There are hardly estimates on $R(n,k)$ in the literature, but the best approach seems to be the following.
Combine the results of the previous section with estimates on the product of the first primes.
For the complete results an explicit estimate is required, e.g.\\
{\it the product of the primes $\leq r$ is less than $3^r$}, see \cite{han}. \\
For the asymptotic results the best possible result reads \\
{\it the product of the primes $\leq r$ is $ e^{(1+o(1))r}$},\\
which follows from the proof of the Prime Number Theorem. It follows that the product of the first $t$ primes is $t^{(1+o(1))t}$. Thus, as an example, from the first result of subsection 4.2, we have for $n > \exp(c_4(\log k)^2)$,
$$R(n,k) \geq k^{ (1+o(1))k}.$$

Khod\v{z}aev \cite{kho} answered a question of Sprind\v{z}uk (\cite{spr} p.186) by showing that there exists no constant $c_6$ such that there are infinitely many pairs of integers $(n,k)$ with $k< (\log n)^{c_6}$ for which 
\begin{equation} \label{spr}
R(n,k) < k^k.
\end{equation}
More precisely, he showed that there exist positive constants $c_7, c_8$ such that (\ref{spr}) does not hold when $1 \leq k \leq c_7 \sqrt{n}$, but (\ref{spr}) holds for all $k \geq c_8 \sqrt{n}$.

\section{The $m$-th powerfree part}

Khod\v{z}aev \cite{kho2} proved the result on $Q_2(N)$ corresponding to his result on $R(N)$ mentioned above. Here $\sqrt{n}$ is replaced by $\sqrt{n/ \log n}$.

Sprind\v{z}uk \cite{spr} and Turk \cite{tu82} gave bounds for the greatest $m$-free part of a polynomial $f(x) \in \mathbb{Z}[x]$ at integers $x$. These bounds imply, for $k \geq 3$,
$$ Q_2(n,k) \gg_{\varepsilon,k} ( \log n)^{\frac{1}{12} -\varepsilon}.$$
 (cf. \cite{spr}, p. 161, formula (2.10)) and for $k \geq 3,m \geq 3$,
$$ Q_m(n,k) > c_9^{-m^2} (\log n)^{1/(2m^2 \phi(m))},$$
where $c_9>0$ depends only on $k$ and $\phi(m)$ is the Euler function
(cf. \cite{spr}, p. 139, formula (1.2)). 

It follows from a result of Turk \cite{tu82} on polynomial values that for $k\geq 3, m\geq 2$, $n > 20$ 
$$P(Q_m(n,k)) \gg_{k,m} \log_2 n$$
and for $k = 2, m \geq 3$,
$$P(Q_m(n,k)) \gg_m \log_3 n.$$
%For $Q_m(n,k)$ itself he derived the following lower bound for $k \geq 3$,
%$$ \log (Q_m(n,k)) \gg_{k, \varepsilon} (\log_2 n)^{1/5 - \varepsilon}.$$

De Weger and Van de Woestijne \cite{ww} considered $$\lambda_{m}(n,k) = \max_{i=0,1, \dots, k-1} Q_m(n-i).$$ They proved that for $k \geq 2, m \geq 3$ and all $n \in \mathbb{N}$
\begin{equation} \label{wwo}
\lambda_m(n,k) \gg_{k,m} ( \log n)^{1/(2m-1)}.
\end{equation}
On the other hand, they showed that there are infinitely many pairs $k,m$ such that $$\lambda_m(n,k) \ll_{k,m} n^{1-m/(km-1)}.$$
If $k$ and $m$ are odd, the exponent may be replaced by $1-m/(km-2)$.
In Section 8 we argue that it is likely that the last two results are close to the best possible ones.

In order to obtain a lower bound for $Q_m(n,k)$ from (\ref{wwo}), let $j$ be the value of $i$ for which $\min_{i=0,1, \dots, k-1} Q_m(n-i)$ is attained. By the argument used to derive formula (\ref{wwo}) applied with $k=2$ to every pair $(n-i,n-j)$ with $i \not= j$ it follows that 
\begin{equation} \label{wwlike} \prod_{i=0}^{k-1} Q_{m}((n-i)(n-j)) \gg_{k,m} (\log n)^{(k-1)/(2m-1)} .
\end{equation}
We have to correct for the common factors. By an argument of Erd\H{o}s \cite{erd} it suffices to divide by 
\begin{equation} \label{erdos}
  \prod_{p <k} p^{[\frac {k}{p}] + [\frac {k} {p^2}] + \dots} <k!
 \end{equation}
which depends only on $k$. Therefore (\ref{wwlike}) and (\ref{erdos}) imply the following result.
\begin{theorem}
For $m \geq 3$ the greatest $m$-free part of $\prod_{i=0}^{k-1} (n+i)$ satisfies
$$ Q_m(n,k) \gg_{k,m} ( \log n)^{(k-1)/(2m-1)}.$$
\end{theorem}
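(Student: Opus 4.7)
The plan is to formalize the sketch given in the paragraph preceding the theorem. Let $j \in \{0,1,\dots,k-1\}$ be an index at which $\min_i Q_m(n+i)$ is attained. For each $i \neq j$, I apply the argument yielding (\ref{wwo}) with $k=2$ to the pair $(n+i,n+j)$, obtaining
$$\max\bigl(Q_m(n+i), Q_m(n+j)\bigr) \gg_{k,m} (\log n)^{1/(2m-1)}.$$
By the choice of $j$, this maximum is realized by $Q_m(n+i)$, so each such value is $\gg_{k,m} (\log n)^{1/(2m-1)}$. Multiplying over the $k-1$ indices $i \neq j$ (and using $Q_m(n+j) \geq 1$ for the missing factor) gives
$$\prod_{i=0}^{k-1} Q_m(n+i) \gg_{k,m} (\log n)^{(k-1)/(2m-1)}.$$

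The remaining step is to convert this product of individual $Q_m$-values into a lower bound for $Q_m(N)$ itself. Comparing $p$-adic valuations, the two quantities agree at every prime $p \geq k$, since at most one of the factors $n+i$ is divisible by such a prime. For primes $p < k$, at most $\lfloor k/p \rfloor + 1$ of the factors are divisible by $p$, each contributing at most $p^{m-1}$ to the product on the left. Hence the ratio $\prod_i Q_m(n+i)/Q_m(N)$ is bounded by $\prod_{p<k} p^{(m-1)(\lfloor k/p\rfloor+1)}$, a constant depending only on $k$ and $m$; this is the Erd\H{o}s-type correction already referenced in (\ref{erdos}). Dividing yields the claimed inequality $Q_m(n,k) \gg_{k,m} (\log n)^{(k-1)/(2m-1)}$.

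The main obstacle is precisely this last passage: a small prime could in principle divide many of the factors $n+i$, inflating $\prod_i Q_m(n+i)$ without corresponding growth in $Q_m(N)$. The saving is that only primes $p<k$ can appear in more than one factor, so the inflation is bounded by a quantity depending only on $k$ and $m$, not on $n$.
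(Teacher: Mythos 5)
Your proposal is correct and follows essentially the same route as the paper: choose $j$ minimizing $Q_m(n+i)$, apply the $k=2$ case of the de Weger--van de Woestijne argument to each pair $(n+i,n+j)$ so that minimality transfers the lower bound to each $Q_m(n+i)$ with $i\neq j$, multiply, and then remove the contribution of primes $p<k$ (the Erd\H{o}s-type correction), which is bounded in terms of $k$ and $m$ alone. Your explicit $p$-adic accounting of the correction factor is, if anything, slightly more careful than the paper's sketch.
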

\noindent The exponent of $\log n$ is much better than in the earlier mentioned result of Sprind\v{z}uk.

\section{The Erd\H{o}s - Woods conjecture}

Erd\H{o}s \cite{erd80}) conjectured that if $m,n$ are positive integers there exists a $k$ such that if for $1 \leq i \leq k$ the numbers $m+i$ and $n+i$ have the same prime factors, then $m=n$. This problem is equivalent with a problem of J. Robinson in logic (Woods \cite{woo}). The only known examples of positive integers $(m,n,k)$ with $1\leq m<n, k \geq 2$ and 
\begin{equation} \label{ew}
R(m+i) = R(n+i)~ {\rm for} ~ 0 \leq i <k
\end{equation}
arise from
$$R(2^h-2) = R(2^h(2^h-2)), R(2^h-1) = R((2^h-1)^2)~~(h \geq 2)$$
and
$$R(75) = R(1215) =15, R(76) = R(1216) = 38.$$

It is proved in \cite{bsw} that if (\ref{ew}) holds then
$$ \log k \ll ( \log m \log_2 m)^{1/2}~~{\rm for}~k \geq 3,$$
$$ \log (n-m) \gg k(\log k)^2 / \log_2 k ~~{\rm for}~k \geq 3,$$
and
$$\log(n-m) \gg k (\log k + \log_3 n) \log_2 y(\log_{3} y)^{-1}~~{\rm for}~k \geq 27.$$

%%%%%%%%%%%%%%%%%%%%%%%%%%%%%%%%%%%%%%
\section{Results under the $abc$-conjecture}

The well known $abc$-conjecture reads as follows.

\begin{conj} \label{abc} {\rm [Oesterl\'e and Masser, 1985]}
For any given $\varepsilon > 0$ and coprime positive integers $a,b,c$ satisfying $a+b=c$ we have
$$c \ll_{\varepsilon} R(abc)^{1+\varepsilon}.$$
\end{conj}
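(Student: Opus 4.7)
The plan will be ambitious, since the $abc$-conjecture has resisted proof since 1985 and no widely accepted argument exists; I will sketch the most geometric route and indicate where the main obstacle lies. First I would pass to an equivalent inequality for elliptic curves. To any coprime triple with $a+b=c$, attach the Frey curve $E_{a,b,c}\colon y^2=x(x-a)(x+b)$ over $\mathbb{Q}$. A direct calculation gives a minimal discriminant $\Delta$ with $|\Delta|\asymp (abc)^2$ and a conductor $N$ dividing $R(abc)$ up to a bounded power of $2$, so that Conjecture \ref{abc} is essentially equivalent to the uniform Szpiro-type bound $|\Delta|\ll_\varepsilon N^{6+\varepsilon}$ over this one-parameter family of semistable curves.

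Second, I would exploit modularity. By the theorem of Wiles and Breuil--Conrad--Diamond--Taylor the curve $E_{a,b,c}$ corresponds to a weight-two newform $f$ of level $N$, and the modular parameterization $X_0(N)\to E_{a,b,c}$ lets one compare the Faltings height of $E_{a,b,c}$ with the Petersson norm of $f$. Arakelov theory on a regular model of $X_0(N)$ then translates the Szpiro inequality into a bound on the self-intersection of the relative dualizing sheaf, so that Conjecture \ref{abc} reduces to making such a bound uniform and effective in $N$, in the spirit of work of Szpiro, Mai--Murty and Ullmo.

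The hard part, and the reason every attempt has stalled, is obtaining nontrivial control on the prime support of a sum $a+b$ in terms of the prime supports of $a$ and $b$. In the function field analogue this is handed to us for free by the Mason--Stothers theorem via Riemann--Hurwitz; over $\mathbb{Z}$ no proven analogue exists. Transcendence methods in the tradition of Baker, Matveev, Stewart and Yu currently yield only $c\ll\exp\bigl(R(abc)^{1/3+\varepsilon}\bigr)$, which is far weaker than the desired $(1+\varepsilon)$ exponent because Baker-type estimates lose a power rather than a constant. Closing this gap appears to require a genuinely new arithmetic input, such as a working theory of arithmetic differentiation along the lines of Buium's $p$-jet spaces, or anabelian machinery of the type proposed by Mochizuki; producing and verifying such an input is the principal obstacle, and I cannot offer a concrete proposal for supplying it.
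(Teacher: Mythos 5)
There is nothing here to compare against: the statement you were asked to prove is the $abc$-conjecture itself, which the paper states as Conjecture \ref{abc} precisely because it is open, and which is used in Sections 8 and 9 only as a hypothesis. The paper offers no proof, and your proposal, by your own candid admission, does not contain one either. What you have written is a survey of the standard reductions — Frey curve, Szpiro-type discriminant--conductor inequalities, modularity, Arakelov bounds on the self-intersection of the relative dualizing sheaf on $X_0(N)$ — together with an accurate diagnosis of where every known attack fails: there is no proven arithmetic analogue of the Mason--Stothers/Riemann--Hurwitz input that makes the function field case trivial, and Baker--Matveev--Stewart--Yu linear forms in logarithms lose a power rather than a constant, giving only bounds of the shape $c \ll \exp\bigl(R(abc)^{1/3+\varepsilon}\bigr)$. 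That diagnosis is correct, but identifying the obstacle is not the same as removing it; the entire content of the conjecture is the missing step you defer to ``a genuinely new arithmetic input.''

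One technical caution on the reduction you do sketch: the $abc$-conjecture is equivalent to the \emph{modified} Szpiro conjecture (bounding $\max(|c_4|^3, c_6^2)$ by $N^{6+\varepsilon}$), not to the discriminant form $|\Delta| \ll_\varepsilon N^{6+\varepsilon}$; the latter follows from $abc$ but returns only a weaker $abc$ exponent, so the two are not ``essentially equivalent'' in the clean sense you assert, and any strategy that proves only the discriminant bound would not recover the conjecture as stated. For the purposes of this paper the correct course is the one the authors take: record the statement as a conjecture and derive consequences conditionally, as is done in Theorems 8.1, 8.2 and (via Baker's explicit version, Conjecture \ref{abcexpl}) Theorem 9.1.
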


\noindent In this section we give applications of this conjecture.

\subsection{Estimates for fixed $k$}
 Langevin \cite{lanc} (see also Granville \cite{gra}) proved that if the $abc$-conjecture is true and $g(x) \in \mathbb{Z}[x]$ has no repeated roots, then for any $\varepsilon >0$ we have 
$\prod_{p|g(n)} p \gg |n|^{{\rm deg}(g)-1 - \varepsilon}.$ It follows from this result that under the $abc$-conjecture 
\begin{equation} \label{lan}
 R(n,k) \gg_k |n|^{k-1- \varepsilon}.
 \end{equation}

De Weger and Van de Woestijne \cite{ww} showed that for any $\varepsilon >0, k \geq 2, m \geq 2$ under the $abc$-conjecture there are only finitely many integers $n$ for which 
\begin{equation} \label{ww}
\lambda_m(n,k) \leq n^{1 - \frac{m}{km-k} - \varepsilon}.
\end{equation}
This is a very good counterpart to their unconditional results with exponent $1-m/(km-1)$ and $1-m/(km-2)$ mentioned in Section 6.
Furthermore they proved that, under the $abc$-conjecture, for any $\varepsilon > 0$,
\begin{equation} \label{ww2}
Q_m(n,k) \gg_{k,m, \varepsilon} n^{k - 1 - \frac {1} {m-1} -\varepsilon}
\end{equation}
(\cite{ww}, Proposition 3.3).
Note that, apart from $\varepsilon$, both (\ref{lan}) and (\ref{ww2}) for $k=2$ are the best possible. For (\ref{lan}) it suffices to observe that $n$ can be a power of 2 and the other terms are at most $n+k$, for (\ref{ww2}) that $x=n+1, y=n$ can be a solution of the Pell equation $x^2 - 2y^2=1$.

It is easy to see that \cite{ww} is not far from the best possible.
There are infinitely many cases in which 

\begin{equation} \label{qmplus}
Q_m(n,k) \ll_{k,m} n^{k-1-\frac1m}.
\end{equation}
Indeed, let $m$ be odd, $a$ an $m$-th power and consider $n=a^m+1$. Then the $m$-th power part of $N$ is at least $a^m(a+1) \geq n^{1+1/m}$. This implies \eqref{qmplus}.

We shall prove (\ref{lan}),(\ref{ww2}) and a related result by a new method based on the following lemma which may have some independent interest.

\begin{lemma}
Let $k$ be a positive integer. Let $x$ be a positive real variable.
Then 
$$ \prod_{{\rm even}~i=0}^k (x+i)^{k\choose i} - \prod_{{\rm odd}~i=0}^k (x+i)^{k\choose i} = 
-(k-1)! x^{2^{k-1} -k} +O_k(x^{2^{k-1}-k-1}). $$
\end{lemma}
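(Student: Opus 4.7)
The plan is to regard both sides as monic polynomials in $x$ of degree $2^{k-1}$ and to read off the leading term of their difference from a logarithmic expansion at infinity. Set
$$F(x) := \prod_{{\rm even}\ i=0}^{k} (x+i)^{\binom{k}{i}}, \qquad G(x) := \prod_{{\rm odd}\ i=0}^{k} (x+i)^{\binom{k}{i}}.$$
Since $\sum_{{\rm even}\ i}\binom{k}{i} = \sum_{{\rm odd}\ i}\binom{k}{i} = 2^{k-1}$, both $F$ and $G$ are monic of degree $2^{k-1}$, so $F-G$ is a polynomial of degree at most $2^{k-1}-1$, and it suffices to identify its leading term from the behavior as $x\to\infty$.

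First I would factor out $x^{2^{k-1}}$ from each product and use the expansion $\log(1+i/x)=\sum_{j\ge 1}(-1)^{j-1}(i/x)^{j}/j$. The $\log x$ contributions cancel between $\log F$ and $\log G$, and combining the even- and odd-indexed sums gives
$$\log\frac{F(x)}{G(x)}=\sum_{j\ge 1}\frac{(-1)^{j-1}}{j\,x^{j}}\,S_{j},\qquad S_{j}:=\sum_{i=0}^{k}(-1)^{i}\binom{k}{i}\,i^{j},$$
valid for $x>k$, since $\sum_{{\rm even}\ i}\binom{k}{i}i^{j}-\sum_{{\rm odd}\ i}\binom{k}{i}i^{j}=S_{j}$.

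The key step is the classical finite-difference identity
$$\sum_{i=0}^{k}(-1)^{k-i}\binom{k}{i}\,i^{j}=k!\,S(j,k),$$
where $S(j,k)$ is the Stirling number of the second kind; equivalently, the $k$-th forward difference annihilates every polynomial of degree less than $k$. Hence $S_{j}=(-1)^{k}k!\,S(j,k)$, so $S_{j}=0$ for $0\le j<k$ and $S_{k}=(-1)^{k}k!$. Substituting yields
$$\log\frac{F(x)}{G(x)}=\frac{(-1)^{k-1}}{k}\cdot(-1)^{k}k!\,x^{-k}+O_{k}(x^{-k-1})=-\frac{(k-1)!}{x^{k}}+O_{k}(x^{-k-1}).$$
Exponentiating gives $F(x)/G(x)=1-(k-1)!\,x^{-k}+O_{k}(x^{-k-1})$, and multiplying by $G(x)=x^{2^{k-1}}\bigl(1+O_{k}(1/x)\bigr)$ produces the claimed expansion
$$F(x)-G(x)=-(k-1)!\,x^{2^{k-1}-k}+O_{k}\bigl(x^{2^{k-1}-k-1}\bigr).$$

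I do not anticipate a real obstacle: the only nontrivial ingredient is the vanishing of $S_{j}$ for $j<k$, which is the standard finite-difference fact invoked above, and the surrounding algebra is routine bookkeeping. A sanity check in the smallest cases ($k=2$ gives $F-G=-1$; $k=3$ gives $F-G=-2x-3$) matches the claimed leading term, confirming the sign.
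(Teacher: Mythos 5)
Your proposal is correct and follows essentially the same route as the paper: a logarithmic expansion of the ratio of the two products, the finite-difference/Stirling-number identity to show $S_j=0$ for $j<k$ and $S_k=(-1)^k k!$, and then exponentiation and multiplication by the monic factor $x^{2^{k-1}}$. The only cosmetic difference is that the paper first carries out the expansion in the variable $1+ix$ as $x\to 0$ and then passes to $x\to\infty$, whereas you expand directly at infinity.
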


\begin{proof}
The following formula holds for Stirling numbers of the second kind:
$$ S_{n,k} = \frac {1}{k!} \sum_{i=0}^k (-1)^{k-i} {k\choose i} i^n.$$
We have $S(n,n) =1$ and  $S(n,k) = 0$ for $k>n$ (see e.g. \cite{as}, pp. 824-825).
It follows that, as $ x \to 0$,
$$ \sum_{i=0}^k (-1)^i {k \choose i} \log (1+ix)  = \sum_{i=0}^k (-1)^i {k \choose i} \sum_{j=1}^{\infty} (-1)^{j-1} \frac {i^j x^j}{j}=$$
$$ \sum_{j=1}^{\infty} \frac {(-1)^{k-j-1} x^j}{j} \sum_{i=0}^k (-1)^{k-i} {k \choose i} i^j =  -(k-1)! x^k +O_k(x^{k+1}). $$
Hence
$$\frac { \prod_{i~{\rm even},~i=0}^k~ (1+ix)^{k\choose i}} { \prod_{i~{\rm odd},~i=0}^k~ (1+ix)^{k\choose i}} -1 = 
-(k-1)! x^k +O_k(x^{k+1}). $$
Since $\sum_{i=0}^k (-1)^i {k \choose i} = (1-1)^k =0$, we obtain, for $ x \to \infty$,
$$\frac {\prod_{i~{\rm even},~i=0}^k~ (x+i)^{k\choose i}} { \prod_{i~{\rm odd},~i=0}^k~ (x+i)^{k\choose i}} -1 = 
-(k-1)! x^{-k} +O_k(x^{-k-1}). $$
Thus, using that $\sum_{i~{\rm odd},~i=0}^k~ {k \choose i} = 2^{k-1}$,
$$ \prod_{i~{\rm even,}~i=0}^k~ (x+i)^{k\choose i} - \prod_{i~{\rm odd},~i=0}^k~ (x+i)^{k\choose i} = 
-(k-1)! x^{2^{k-1} -k} +O_k(x^{2^{k-1}-k-1}). $$
\end{proof}

\begin{theorem} Let $k,m$ and $n$ be integers with $n \geq k\geq 2, m \geq 2$. 
Then, assuming the $abc$-conjecture, for every $\varepsilon > 0$ we have\\
{\rm(a)}  $Q_m(n,k) \gg_{\varepsilon, k, m}  n^{k-1-\frac {1}{m-1} -\varepsilon}$, \\
{\rm(b)} $R(n,k) \gg_{\varepsilon,k} n^{k-1 -\varepsilon}$, \\
{\rm (c)} $P(n,k) \geq (k-1+o_k(1)) \log n$.
\end{theorem}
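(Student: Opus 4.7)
The plan is to prove part (b) directly from the lemma via the abc-conjecture, and then deduce (a) and (c) from (b). For (b), I would apply the lemma with parameter $k-1$ in place of $k$, setting $A = \prod_{i\text{ even}} (n+i)^{\binom{k-1}{i}}$ and $B = \prod_{i\text{ odd}} (n+i)^{\binom{k-1}{i}}$, where $i$ ranges over $\{0, 1, \ldots, k-1\}$. The lemma then gives $|A - B| \asymp_k n^{2^{k-2}-(k-1)}$, while $A, B \asymp_k n^{2^{k-2}}$. Since the leading term of $A-B$ is negative, $B > A$ for $n$ large, so I write $A + (B-A) = B$. Dividing through by $D = \gcd(A,B)$ (which also divides $B-A$) yields a pairwise coprime triple of positive integers $A/D,\, (B-A)/D,\, B/D$. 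The abc-conjecture then gives
$$ B/D \ll_{k,\varepsilon} \bigl(R(A/D)\, R(B/D)\, R((B-A)/D)\bigr)^{1+\varepsilon}.$$
Coprimality gives $R(A/D)\, R(B/D) = R(AB/D^2) \leq R(AB)$, and since $AB$ and $N = \prod_{i=0}^{k-1}(n+i)$ have the same prime divisors, $R(AB) = R(n,k)$. Also $R((B-A)/D) \leq (B-A)/D$. Multiplying both sides by $D$ and absorbing the resulting factor $D^{-\varepsilon}$ using $D \geq 1$ gives $B \ll_{k,\varepsilon} R(n,k)^{1+\varepsilon}(B-A)^{1+\varepsilon}$. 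Substituting the size estimates and solving yields $R(n,k) \gg n^{(k-1) - 2^{k-2}\varepsilon/(1+\varepsilon)}$, and taking $\varepsilon$ small enough proves (b).

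Both (a) and (c) follow quickly. For (a), since $R(x) \mid Q_m(x)$ whenever $m \geq 2$, we have $Q_m(n,k) \geq R(n,k) \gg_{k,\varepsilon} n^{k-1-\varepsilon}$, which is strictly stronger than the asserted bound. For (c), the prime number theorem yields $R(n,k) \leq \prod_{p \leq P(n,k)} p = \exp\bigl((1+o(1))\, P(n,k)\bigr)$, hence $P(n,k) \geq (1-o(1)) \log R(n,k) \geq (1-o(1))(k-1-\varepsilon) \log n$ for every $\varepsilon > 0$; letting $\varepsilon \to 0$ gives $P(n,k) \geq (k-1+o_k(1)) \log n$.

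The main subtlety lies in the quantity $D = \gcd(A,B)$, which a priori might be large: its prime factors are bounded by $k-1$, but their multiplicities are not obviously controlled. The key observation is that $D$ never needs to be estimated. In the abc-bound the contribution of $D$ appears as a factor $D^{-\varepsilon}$ which is automatically at most $1$, so only the gap $|A-B|$ furnished by the lemma drives the final exponent $k-1$.
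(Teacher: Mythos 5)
Your treatment of parts (b) and (c) is correct and follows essentially the same route as the paper: the same alternating products $\prod_{i\ {\rm even}}(n+i)^{k-1\choose i}$ and $\prod_{i\ {\rm odd}}(n+i)^{k-1\choose i}$ fed into the $abc$-conjecture, with the radical of their product identified with $R(n,k)$ and the difference controlled by the Lemma. Your handling of the common factor is a small but genuine variation: the paper bounds $g=\gcd(A,B)=O_k(1)$ by the Erd\H{o}s argument \eqref{erdos}, whereas you observe that $D$ enters the final inequality only through the factor $D^{-\varepsilon}\le 1$ and so never needs to be estimated. Both are valid; yours is arguably cleaner.

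Part (a), however, is wrong. You deduce it from (b) via the claim that $R(x)\mid Q_m(x)$, which presupposes that $Q_m(x)$ is the greatest $m$-free \emph{divisor} $\prod_p p^{\min(a_p,m-1)}$ of $x=\prod_p p^{a_p}$. The $Q_m(x)$ intended here is the $m$-free \emph{part}, i.e.\ $x$ divided by the largest $m$-th power dividing it, namely $\prod_p p^{a_p \bmod m}$; this is forced by the identity $AB^m=N$ with $A=Q_m(N)$ used in the paper's proof, and by the surrounding discussion (the Pell-equation example showing (a) is sharp for $k=2$, and the unconditional bound \eqref{qmplus} that $Q_m(n,k)\ll_{k,m} n^{k-1-\frac1m}$ infinitely often). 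Under that definition $R(x)\nmid Q_m(x)$ in general (e.g.\ $Q_m(p^m)=1<p=R(p^m)$), and your conclusion $Q_m(n,k)\gg n^{k-1-\varepsilon}$ would flatly contradict \eqref{qmplus} as soon as $\varepsilon<1/m$; the fact that the stated exponent $k-1-\frac{1}{m-1}$ is strictly smaller than $k-1$ is a signal that (a) is not simply a weakening of (b). The missing step is exactly where this loss occurs: write $N=AB^m$ with $A=Q_m(N)$, so that
$$R(n,k)=R(AB)\le AB=A^{\frac{m-1}{m}}N^{\frac1m}\ll_k A^{\frac{m-1}{m}}n^{\frac km};$$
combining this with part (b) gives $A\gg_{\varepsilon,k,m} n^{(k-1-\frac km-\varepsilon)\frac{m}{m-1}}=n^{k-1-\frac{1}{m-1}-\varepsilon\frac{m}{m-1}}$, which is the asserted bound after rescaling $\varepsilon$.
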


\begin{proof} (a) We apply the $abc$-conjecture to 
$$ \frac 1g  \prod_{{\rm even}~i=0}^{k-1} (n+i)^{k-1 \choose i} - \frac 1g \prod_{{\rm odd}~i=0}^{k-1} (n+i)^{k-1 \choose i} = 
O_k(n^{2^{k-2} -k+1}), $$
where $g$ is the greatest common divisor of the first two terms. By the argument (\ref{erdos}) of Erd\H{o}s, $g = O_k (1).$
Hence, by $AB^m =N \ll_k n^{k}$,
\begin{equation} \label{basic}
n^{2^{k-2}} \ll_{\varepsilon,k} \left(R(AB)  \cdot n^{2^{k-2}-k+1}\right)^{1+\varepsilon} \ll_{\varepsilon,k} \left(A^{\frac {m-1}{m}}n^{2^{k-2}-k+1+\frac{k}{m}}\right)^{1 + \varepsilon}. 
\end{equation}
It follows that
$$ A \gg_{\varepsilon,k,m} n^{k -1- \frac{1}{m-1} - \varepsilon \frac {m}{m-1}2^{k-2}}. $$
(b) It follows from (\ref{basic}) that 
$$R(AB) \gg_{\varepsilon,k} n^{k-1 - \varepsilon \cdot 2^{k-2}}.$$
(c) Since the product of the first $l$ primes is $e^{(1+o(1))l}$, we deduce from part (b) that 
$P(n,k) \geq (k-1+o_k(1)) \log n$.
\end{proof}

\subsection{Estimates for general $n,k$}

\subsubsection{Greatest prime factor}

\begin{theorem}
Let $0 < \varepsilon < 1/2$ and $n > k^{3/2}$. Then under the $abc$-conjecture, there exists a number $k_1$ depending only on $\varepsilon$ such that for $k \geq k_1$ we have $$ P(n,k) \geq (\frac 12 - \varepsilon) k\log n.$$
\end{theorem}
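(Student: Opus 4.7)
The plan is to apply the $abc$-conjecture to the $k-1$ pairs of consecutive integers inside the block, combine the resulting inequalities by summation, and then invoke the Prime Number Theorem. The range in which $n$ is only slightly larger than $k^{3/2}$ will be handled instead by appealing to the unconditional estimate (\ref{sh74}).

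Write $P = P(n,k)$, and let $\omega^*(m)$ denote the number of prime divisors of $m$ that exceed $k$. For each $i \in \{0,1,\ldots,k-2\}$, applying the $abc$-conjecture to the coprime triple $(n+i,\,1,\,n+i+1)$ gives $n+i+1 \leq C_\delta \, R((n+i)(n+i+1))^{1+\delta}$ for any $\delta > 0$. I would split the radical into its ``small-prime'' ($p \leq k$) and ``large-prime'' ($p > k$) parts. Since a prime $p > k$ divides at most one of two consecutive integers, the large-prime part is at most $P^{\omega^*(n+i) + \omega^*(n+i+1)}$.

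The essential technical point is that, although the small-prime part of a single radical may be as large as $\prod_{p \leq k} p = e^{(1+o(1))k}$, summing the logarithms over $i$ and using that a prime $p \leq k$ divides at most $k/p + 1$ of the terms $n+j$ yields, by a Mertens-type computation,
\[
\sum_{i=0}^{k-2} \log R_{\mathrm{small}}\bigl((n+i)(n+i+1)\bigr) \;\leq\; 2\sum_{p \leq k}\Bigl(\frac{k}{p}+1\Bigr)\log p \;=\; 2k\log k + O(k).
\]
Adding up the $abc$ inequalities, noting that each $\omega^*(n+j)$ appears at most twice on the large-prime side, and using $\sum_{j} \omega^*(n+j) \leq \pi(P)$ (each prime $>k$ divides at most one $n+j$), I arrive at
\[
\pi(P)\log P \;\geq\; \frac{(k-1)\log n}{2(1+\delta)} - k\log k - O_\delta(k).
\]
By the Prime Number Theorem $\pi(P)\log P = (1+o(1))P$; choosing $\delta$ to be a small multiple of $\varepsilon$ then gives $P \geq (\tfrac12 - \varepsilon) k \log n$, provided the main term dominates the $k\log k$ error. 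A sufficient threshold is $n \geq k^{2/\varepsilon}$.

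For the remaining range $k^{3/2} < n < k^{2/\varepsilon}$ the target is at most $O_\varepsilon(k \log k)$, while the unconditional estimate (\ref{sh74}), $P(n,k) \gg k \log k \cdot \log_2 k / \log_3 k$, dominates this for $k \geq k_1(\varepsilon)$, since $\log_2 k / \log_3 k \to \infty$. The hard part is the Mertens-type averaging of the small-prime contribution: a crude per-pair bound would cost a factor of order $k$ in the error term and render the constant $\tfrac12$ unreachable. The remaining bookkeeping — tracking lower-order terms, coordinating $\delta$ with $\varepsilon$, and choosing $k_1(\varepsilon)$ large enough that both the $abc$-based and the unconditional half of the argument succeed — is routine.
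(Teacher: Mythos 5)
Your proof is correct, but it takes a genuinely different route from the paper's. The paper argues by contradiction: assuming $P(n,k) < (\frac 12 - \varepsilon)k\log n$, it bounds $\omega(n,k)$ from above via the Prime Number Theorem, writes $n+i=A_iB_i$ with $A_i$ the $k$-smooth part, uses Erd\H{o}s's combinatorial argument to show that most $A_i$ are at most $k^{8/\varepsilon}$, and then pigeonholes on $\omega$ to find \emph{two} indices $i_1>i_2$ whose rough parts each have radical at most $n^{1/2-\varepsilon/3}$; a \emph{single} application of the $abc$-conjecture to $n+i_1=(n+i_2)+(i_1-i_2)$ then forces $\log n \ll_{\varepsilon} \log k$, and the unconditional bound (\ref{sh74}) supplies the contradiction. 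You instead apply $abc$ to all $k-1$ consecutive pairs and amortize: the Mertens-type bound $2k\log k+O(k)$ on the accumulated small-prime radicals together with the fact that each prime $>k$ divides at most one term yields $\pi(P)\log P \geq \frac{(k-1)\log n}{2(1+\delta)} - k\log k - O_{\delta}(k)$ directly, the constant $\frac 12$ arising from each term lying in two pairs rather than from two radicals of size about $n^{1/2}$. Both arguments must fall back on (\ref{sh74}) when $n$ is polynomially bounded in $k$ (your threshold $n\geq k^{2/\varepsilon}$ versus the paper's derived inequality $\log n\leq c_{12}\log k$), so that ingredient is shared. What your version buys is directness — a genuine lower bound rather than a reductio, no bookkeeping with the sets $S_1,S_2$ and the cascade of $\varepsilon$-fractions — while the paper's version invokes the $abc$-conjecture only once and showcases the smooth/rough decomposition that recurs throughout this literature. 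The points you defer as routine (coordinating $\delta$ with $\varepsilon$, absorbing $O_{\delta}(k)$, and the asymptotic $\pi(P)\log P\sim P$, which is legitimate since $P>k\to\infty$ by Sylvester) do all go through.
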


\begin{proof}
Let $k \geq k_1$. Assume that 
\begin{equation} \label{ineq7} 
P(n,k) < \left( \frac 12 - \varepsilon \right) k \log n.
\end{equation}
Then, for a suitable constant $c_{10},$ by the Prime Number Theorem with remainder term,
\begin{equation} \label{ineq8}
\omega(n,k) \leq \frac{(\frac12 - \varepsilon)k \log n}{ \log k + \log_2 n} + \frac{c_{10} k \log n}{(\log k + \log_2 n)^2} \leq \frac {(\frac 12 - \frac {\varepsilon}{2}) k \log n}{\log k + \log_2n}.
\end{equation}
For $0\leq i < k$, write $$n+i = A_iB_i$$ with $P(A_i) \leq k$ and gcd$(B_i, \prod_{p \leq k}p) = 1$. 
For every prime $p \leq k$, let $i_p$ with $0 \leq i_p < k$ be such that $p$ appears to the maximum power in $A_{i_p}$. Let $S_1$ be the set obtained from $\{0,1, \dots, k-1\}$ by deleting $i_p$ for each prime $p \leq k$. Then $|S_i| \geq k - \pi(k)$. By the combinatorial argument (\ref{erdos}) of Erd\H{o}s,
\begin{equation} \label{ineq9}
 \prod_{A_i \in S_1} A_i \leq k^k.
 \end{equation}
Suppose that there exist at most $(1- \frac{\varepsilon}{4})k$ number of $i$'s in $S_1$ such that 
$$ A_i \leq k^{8 / \varepsilon}.$$
Then there are at least $\varepsilon k/8$ number of $i$'s with $A_i > k^{8 / \varepsilon}.$ Consequently, from (\ref{ineq9}) we get
$$k^k \geq \prod_{i \in S_1} a_i > (k^{8 / \varepsilon})^{\varepsilon k/8} = k^k,$$
a contradiction.
Thus there exists a set $S_2 \subseteq S_1$ with $|S_2| > (1- \frac{\varepsilon}{4})k$ and 
$$A_i \leq k^{8/{\varepsilon}} ~~{\rm for}~~ i \in S_2.$$
Assume that there is at most one $B_i$ with $i \in S_2$ such that 
$$ \omega(B_i) \leq \frac {(\frac 12 - \frac{\varepsilon}{3}) \log n}{\log k + \log_2 n}.$$
Then 
$$\omega(n,k) \geq \sum_{i \in S_2} \omega (B_i) \geq \frac {(\frac 12 - \frac {\varepsilon}{3}) k\log n}{ \log k + \log_2 n} \left((1- \frac {\varepsilon}{4})k-1 \right).$$
From (\ref{ineq8}) we get
$$ \frac {(\frac 12 - \frac {11\varepsilon}{24}) k \log n}{ \log k + \log_2 n} \leq  \frac {(\frac 12 - \frac {\varepsilon}{2}) k \log n}{ \log k + \log_2 n}.$$
Thus there exist at least two $B_i$'s, say $B_{i_1}$ and $B_{i_2}$, with $i_1,i_2 \in S_2, i_1>i_2$ and 
$$\omega(B_{i_j}) \leq \frac {(\frac 12 - \frac {\varepsilon}{3}) \log n}{\log k + \log_2 n} ,~~ j=1,2.$$
By (\ref{ineq7}) we get
$$R(B_{i_j}) \leq (k \log n)^{\frac {(\frac 12 - \frac {\varepsilon}{3}) \log n}{\log k + \log_2 n}} \leq n^{\frac 12 - \frac {\varepsilon}{3}}.$$
Applying the abc-conjecture to
$$n+i_1 = n+i_2 +(i_1-i_2)$$
we get
$$n \leq c_{11}(R((n+i_1)(n+i_2))k)^{1 + \frac {\varepsilon}{2}} \leq c_{11}(R(A_{i_1}B_{i_1}A_{i_2}B_{i_2})k)^{1 + \frac {\varepsilon}{2}}$$
$$\leq c_{11}(k^{\frac{17}{\varepsilon}}n^{(1 - \frac{2\varepsilon}{3})})^{1+ \frac{\varepsilon}{2}} \leq c_{11} k^{\frac{17}{\varepsilon}(1+ \frac{\epsilon}{2})}n^{1-\frac{\varepsilon}{6}},$$
where $c_{11}$ is a constant depending only on $\varepsilon$. Therefore,
\begin{equation} \label{shor}
\log n \leq \frac{6}{\varepsilon} \log c_{11} + \frac {102}{\varepsilon^2}\left(1 + \frac {\varepsilon}{2}\right) \log k \leq c_{12} \log k
\end{equation}
where $c_{12}$ depends only on $\varepsilon$. By (\ref{sh74}) and (\ref{shor}) we get
$$P(n,k) \gg k \log k \frac{\log_2 k}{\log_3 k} \gg k \log n \frac{\log_2 n}{\log_3 n}.$$
Hence the assertion follows. 
\end{proof}

%%%%%%%%%%%%%%%%%%%%%%%%%%%%%%%
\section{Proof of the Erd\H{o}s-Woods conjecture under the explicit $abc$-conjecture}
As a special case of a result on the analogue of the Erd\H{o}s-Woods conjecture for arithmetic progressions it was proved in \cite{blsw} that under assumption of the $abc$-conjecture the original Erd\H{o}s-Woods conjecture admits only finitely many exceptions for every $k>2$ . (See \cite{blsw}, Proposition 1.) For $k=2$ there exist infinitely many solutions: for every integer $h \geq 2$ the pairs $(2^h-2, 2^h(2^h-2))$ and $ (2^h-1, (2^h-1)^2)$ have the same prime divisors. 

An explicit version of the $abc$-conjecture is the following.
\begin{conj} [Baker, \cite{bak}] \label{abcexpl}
Let $a,b$ and $c$ be pairwise coprime positive integers satisfying $a+b=c$. Then
\begin{equation} \label{bak}
c < \frac 65 R(abc) \frac {(\log R(abc))^{\omega(abc)}} {\omega(abc)!}.
\end{equation}
\end{conj}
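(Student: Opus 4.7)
The statement is Baker's explicit refinement of the $abc$-conjecture, itself a major open problem; any proof proposal must be acknowledged as speculative. I will sketch the route that current analytic technology suggests most naturally, together with what I regard as the decisive obstruction.

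The conjectured bound $c < \tfrac{6}{5} R (\log R)^{\omega}/\omega!$, writing $R = R(abc)$ and $\omega = \omega(abc)$, has exactly the shape one obtains by optimizing a naive probabilistic model in which the $\omega$ prime divisors of $abc$ contribute independent random exponents and the relation $a+b=c$ imposes a single multiplicative constraint among the $\log p_i$; the factor $(\log R)^{\omega}/\omega!$ is the volume of a standard simplex in $\omega$ logarithmic dimensions. The plan is therefore to try to make this heuristic rigorous via the theory of linear forms in logarithms of algebraic numbers.

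First I would rewrite $a/c + b/c = 1$ and regard $\log(a/c) - \log(1 - b/c)$ as a linear form $\Lambda = \sum_{i=1}^{\omega} \alpha_i \log p_i$ in the logarithms of the primes $p_1,\dots,p_{\omega}$ dividing $abc$, with integer coefficients satisfying $|\alpha_i| \leq \log c / \log 2$. Second, I would invoke a Matveev--Baker--Wüstholz lower bound for $|\Lambda|$, of the type used in Lemma 3.1, together with the archimedean upper bound $|\Lambda| \ll \min(a,b)/c$. Balancing these should produce an inequality roughly of the form $\log c \ll \prod_{i=1}^{\omega} \log p_i \cdot \log_2 c$, after which the AM--GM inequality $\prod_i \log p_i \leq (\log R / \omega)^{\omega}$ combined with Stirling's formula $\omega^{\omega}/\omega! \sim e^{\omega}/\sqrt{2\pi\omega}$ would generate the factorial saving, and a careful tracking of archimedean constants would be needed to recover the leading factor $6/5$.

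The decisive obstacle is the quantitative strength of the lower bound for $|\Lambda|$: every presently available Baker-type estimate carries a constant of the form $e^{C\omega}$ or $(\log R)^{C\omega}$ with $C$ strictly greater than $1$, whereas Baker's inequality pins the asymptotic leading constant to exactly $6/5$. Bridging this gap appears to require a qualitatively new arithmetic input --- most plausibly an effective modularity- or Arakelov-theoretic control on the Faltings height of a Frey-type elliptic curve attached to $(a,b,c)$, bounded purely in terms of $R(abc)$. Absent such an ingredient, the outline above yields at best something like the unconditional Stewart--Yu estimate $\log c \ll R^{1/3+\varepsilon}$, which is qualitatively weaker than the conjectured shape; this is the step at which I expect any concrete execution of the plan to break down, and it is the reason the statement remains a conjecture rather than a theorem.
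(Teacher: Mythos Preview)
The statement is explicitly labeled as a \emph{Conjecture} in the paper (Baker's explicit $abc$-conjecture), not a theorem, and the paper offers no proof of it whatsoever; it is simply assumed as a hypothesis in order to deduce Theorem~9.1 on the Erd\H{o}s--Woods problem. So there is no ``paper's own proof'' against which to compare your attempt.

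You recognized this correctly at the outset, and your discussion is an honest and well-informed account of why the conjecture is out of reach: the Baker--Matveev machinery produces constants of the shape $e^{C\omega}$ with $C>1$, which is fatally too weak to recover the factorial $\omega!$ in the denominator with leading constant $6/5$. That diagnosis is accurate. The only thing to flag is that what you wrote is not a proof proposal in any operative sense --- it is (appropriately) a sketch of a heuristic together with an explanation of the obstruction --- and it should not be presented as more than that. No argument along these lines, or any other currently known, establishes the statement; the paper treats it purely as an unproved hypothesis.
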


Laishram and Shorey \cite{ls12} proved that a consequence of (\ref{bak}) is that
\begin{equation} \label{ls}
c< (R(abc))^{7/4}.
\end{equation}

Here we prove the complete Erd\H{o}s-Woods conjecture for $k \geq3$ under assumption of the explicit $abc$-conjecture. Of course, it suffices to prove it for $k=3$.
\begin{theorem}
Assume Conjecture \ref{abcexpl}. Then there are no positive integers $n_1<n_2$ such that for $i=0,1,2$ the numbers $n_1+i$ and $n_2+i$ have the same prime divisors.
\end{theorem}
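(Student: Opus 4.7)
The plan is to assume for contradiction that such $n_1 < n_2$ exist, derive a sharp lower bound for the difference $d := n_2 - n_1$ in terms of the common radical, and then contradict this bound by the explicit $abc$-conjecture applied to the identity $(n+1)^2 - n(n+2) = 1$. Setting $r := R(n_1(n_1+1)(n_1+2))$, the hypothesis $R(n_1+i) = R(n_2+i)$ for $i = 0, 1, 2$ also yields $r = R(n_2(n_2+1)(n_2+2))$.

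For the lower bound, define $g_i := \gcd(n_1+i,\, n_2+i)$ for $i = 0, 1, 2$. Every prime dividing $R(n_1+i) = R(n_2+i)$ divides both $n_1+i$ and $n_2+i$, so $R(n_1+i) \mid g_i$, and hence $g_i \geq R(n_1+i)$. Moreover $g_i \mid d$. Since $\gcd(g_0,g_1)$ divides $\gcd(n_1,n_1+1) = 1$ and similarly $\gcd(g_1,g_2) = 1$, while $\gcd(g_0,g_2)$ divides $\gcd(n_1,n_1+2)$, which is $1$ or $2$, a short case analysis combined with the elementary identity
$$R(n_1)R(n_1+1)R(n_1+2) = \begin{cases} r & \text{if } n_1 \text{ is odd}, \\ 2r & \text{if } n_1 \text{ is even}, \end{cases}$$
shows that $\mathrm{lcm}(g_0, g_1, g_2) \geq r$ in both parities, and therefore $d \geq r$.

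Next I apply Conjecture~\ref{abcexpl}, via its consequence \eqref{ls} of Laishram and Shorey, to the pairwise coprime triple $(a,b,c) = (1,\,n_2(n_2+2),\,(n_2+1)^2)$ with $a+b=c$. One has $R(abc) = R(n_2(n_2+1)(n_2+2)) = r$, so $(n_2+1)^2 < r^{7/4}$. Conversely, $n_2 + 1 > n_2 \geq n_1 + d > d \geq r$, giving $(n_2+1)^2 > r^2$. The two bounds are incompatible: $r^2 < r^{7/4}$ forces $r < 1$, contradicting $r \geq 6$ (three consecutive positive integers are always divisible by both $2$ and $3$).

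The only delicate step is the lower bound $d \geq r$; one must track the factor of $2$ potentially shared between $g_0$ and $g_2$ when $n_1$ is even and verify that it is exactly compensated by the doubling in $R(n_1)R(n_1+1)R(n_1+2)$. The remainder is a one-line invocation of the explicit $abc$-conjecture, and since the statement for any $k \geq 3$ is weaker than the case $k = 3$, proving the latter suffices.
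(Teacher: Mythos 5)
Your proposal is correct and follows essentially the same route as the paper: apply the Laishram--Shorey consequence $c<R(abc)^{7/4}$ of Baker's explicit $abc$-conjecture to the identity $(n_2+1)^2-n_2(n_2+2)=1$, and bound the radical $R(n_2(n_2+1)(n_2+2))$ by $n_2-n_1<n_2$ using that every prime of $n_2+i$ divides $n_1+i$ and hence the difference. The only cosmetic difference is that your lcm/parity case analysis for $d\ge r$ can be shortcut: $r$ is squarefree and each of its prime factors divides $d$, so $r\mid d$ directly.
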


\begin{proof}
Apply Conjecture \ref{abcexpl} with (\ref{ls}) in place of (\ref{bak}) to the equation $(n_2 +1)^2 - n_2(n_2+2) =1$. This yields $$ n_2^2 < (R(n_2(n_2+1)(n_2+2)))^{1.75}.$$
Observe that every prime $p$ dividing $n_2(n_2+1)(n_2+2)$ divides either $n_2-n_1$ or $(n_2+1)-(n_1+1)$ or $(n_2+2)-(n_1+2)$. Thus 
$$ R(n_2(n_2+1)(n_2+2)) \leq n_2 - n_1 < n_2.$$
Combining the inequalities we obtain
$$n_2^2 < n_2^{1.75},$$
a contradiction.
\end{proof}

\section{Generalizations}
More general results have been obtained for the greatest prime factor of the product of the terms of a finite arithmetical progression (see e.g. \cite{ls06}, \cite{st90}) and for the greatest prime factor of the value of a polynomial or binary form (see e.g. \cite{spts}), for the number of distinct prime divisors of the product of the terms of a finite arithmetical progression (see e.g. \cite{lai}) and of a polynomial value (see e.g. \cite{tu80}), for the greatest $m$-free part of a polynomial value (see \cite{tu82}) and for the analogue of the Erd\H{o}s-Woods conjecture for an arithmetical progression (see \cite{blsw}).

There are also more general results under the assumption that the $abc$-conjecture is true: for lower bounds for the greatest squarefree divisor of a polynomial value and a binary form value, see \cite{elk}, \cite{lanc}, \cite{gra}, for a lower bound for the greatest $m$-free part of a binary form value, see \cite{lanc}, and for the Erd\H{o}s-Woods conjecture for an arithmetical progression, see \cite{blsw}. 

Finally, for an estimate for the number of prime divisors of a product of consecutive integers under the assumption that Schinzel's Hypothesis H is true, see \cite{blst}.

\end{document}